\documentclass[a4paper,11pt]{article}
\usepackage{mathrsfs, amsmath, amsxtra, amssymb, latexsym, amscd, amsthm}

\voffset =0in
\hoffset =0in
\parskip5pt
\setlength{\oddsidemargin}{0.378in}
\setlength{\evensidemargin}{0.378in}
\setlength{\textwidth}{15.5cm}
\textheight 22truecm
\voffset=-0.5in
\newtheorem{thm}{Theorem}[section]
\newtheorem{cor}{Corollary}[section]
\newtheorem{lem}{Lemma}[section]

\theoremstyle{definition}
\newtheorem{defn}{Definition}[section]

\theoremstyle{remark}

\numberwithin{equation}{section}
\def\ind{{\rm 1\hspace{-0.90ex}1}}
\def\E{\mathbb{E}}
\begin{document}
\title{Total variation bounds in the Lindeberg central limit theorem}

\author{Nguyen Tien Dung\thanks{Department of Mathematics, VNU University of Science, Vietnam National University, Hanoi, 334 Nguyen Trai, Thanh Xuan, Hanoi, 084 Vietnam.}\,\,\footnote{Corresponding author. Email: dung@hus.edu.vn} \and Hoang Thi Phuong Thao$^\ast$}

\date{\today}          
\maketitle
\begin{abstract} In this paper, we obtain an explicit total variation bound  in the central limit theorem for the sums of non-i.i.d. random variables. Our results show that, under suitable assumptions, Lindeberg's condition is sufficient and necessary for the convergence in total variation distance.
\end{abstract}
\noindent\emph{Keywords:} Central limit theorem, Total variation distance.\\
{\em 2010 Mathematics Subject Classification:}  60F15.
\section{Introduction}
Let $(X_k)_{k\geq 1}$ be a sequence of independent real-valued random variables (not necessarily identically distributed) with mean $\E X_k=0$ and finite variances $\sigma_k^2=\E X_k^2\in (0,\infty).$ We consider the normalized sums
\begin{equation}\label{dkrt}
S_n=\frac{X_1+\cdots+X_n}{b_n},
\end{equation}
where $b_n=\sqrt{\sigma_1^2+\sigma_2^2+\cdots+\sigma_n^2},\,\,n\geq 1.$

Let us recall the classical central limit theorem: if the random variables $X_ k$ satisfies Lindeberg's condition:
\begin{equation}\label{lin01}
\lim\limits_{n\to\infty}\frac{1}{b_n^2}\sum\limits_{k=1}^n\E[X_k^2\ind_{\{|X_k|>\varepsilon b_n\}}]=0
\end{equation}
for all $\varepsilon>0,$ where $\ind$ is the indicator function, then $S_n$ converges in distribution to a standard normal random variable $N$ as $n\to \infty.$ Furthermore, if the random variables $X_ k$ satisfy the Feller-L\'evy condition
\begin{equation}\label{lin02}
\max\limits_{1\leq k\leq n}(\sigma_k^2/b_n^{2})\to 0\,\,\,\text{as}\,\,n\to\infty,
\end{equation}
then Lindeberg's condition is both sufficient and necessary. We also have the following optimal bound on the Kolmogorov distance for the rate of convergence (see \cite{Bobkov2023} for a review)
\begin{align}
d_K(S_n,N) := \sup_{x \in \mathbb{R}}|P(S_n\leq x) - P(N\leq x)|&\leq \frac{c}{b_n^3}\sum\limits_{k=1}^n\E[|X_k|^2(b_n\wedge |X_k|)]\label{lin03}\\
&\leq \frac{c}{b_n^3}\sum\limits_{k=1}^n\E|X_k|^3,\label{lin04}
\end{align}
where $c$ is a positive absolute constant. We now consider the total variation distance between the laws of $S_n$ and $N$ defined by
$$d_{TV}(S_n,N):= \frac{1}{2}\sup_{\|h\|_\infty\le 1}|E h(S_n)- E h(N)|,$$
where $h$ is measurable and $\|h\|_\infty:=\sup\limits_{x\in \mathbb{R}}|h(x)|.$  Note that the convergence in total variation is stronger than the convergence in distribution. The study of the central limit theorem in total variation distance has a long history beginning in 1952 with the results of Prokhorov \cite{Prokhorov52}. However, the problem of obtaining optimal estimates for $d_{TV}(S_n,N)$ (under minimal moment conditions) is non-trivial. The reader can consults \cite{Bally2016,Sirazdinov62}  and references therein for the case of independent identically distributed (i.i.d.) random variables with finite third absolute moment. For the case of non-i.i.d. random variables, it is natural to wonder whether the bounds (\ref{lin03}) and (\ref{lin04}) may be strengthened to the total variation distance as well.

When all summands $S_n$ have discrete distributions, $d_{TV}(S_n,N)=1\nrightarrow 0$ as $n\to\infty.$ Hence, to obtain an affirmative answer, we need to impose additional assumptions on the sequence $(X_k)_{k\geq 1}.$ We have the following result, due to Bobkov-Chistyakov-G\"otze \cite{Bobkov2014a}: Assume that the independent random variables $X_1,\cdots , X_n$ have finite third absolute moments, and that $D(X_k)\leq D$ for all $1\leq k\leq n$ and for some $D>0.$ Then
$$d_{TV}(S_n,N)\leq \frac{C_D}{b_n^3}\sum\limits_{k=1}^n\E|X_k|^3,$$
where $C_D$ is a positive constant depending only on $D.$ Here $D(.)$ denotes the relative entropy defined as follows: for a random variable $X$ with density $p(x)$ and finite variance,
$$\text{$D(X)=\int_{-\infty}^\infty p(x)\log \frac{p(x)}{\varphi_X(x)}dx,$ where $\varphi_X(x)=\frac{1}{\sqrt{2\pi{\rm Var}(X)}}e^{-\frac{(x-\E X)^2}{2{\rm Var}(X)}},x\in \mathbb{R}.$}$$
The aim of this paper is to establish the bound (\ref{lin03}) for $d_{TV}(S_n,N).$ Standing for the finiteness of relative entropies, we require the random variables $X_k's$ to have the finite standardised Fisher information. We recall that, given a random variable $X$ with an absolutely continuous density $p(x),$ the standardised Fisher information of $X$ (or its distribution) is defined by
$$
J_{st}(X)= {\rm Var}(X)\int_{-\infty}^{+\infty} \frac{p'(x)^2} {p(x)}dx-1={\rm Var}(X)\E[\rho^2(X)]-1,
$$
where $p'$ denotes a Radon-Nikodym derivative of $p$  and $\rho:=p'/p$ is the score function.  For more details about the Fisher information and the central limit theorem results, we refer the reader to the monograph \cite{Johnson2004} and recent papers \cite{Bobkov2014b,Dung2024,Johnson2020}.

Hereafter, we put $J(X)=J_{st}(X)+1.$ Our main results are stated in the following theorem.
\begin{thm}\label{plu}Let $(X_k)_{k\geq 1}$ be a sequence of independent real-valued random variables $\E X_k=0$ and finite variances $\sigma_k^2=\E X_k^2\in (0,\infty).$ Assume that, for every $k\geq 1,$ the law of $X_k$ has an absolutely continuous density $p_k$ satisfying $J(X_k)<\infty.$ Then, for $S_n$ defined by (\ref{dkrt}), we have
\begin{equation}\label{vhas}d_{TV}(S_n,N)\leq \bigg(\frac{8\pi\max\limits_{1\leq k\leq n}J(X_k)}{1-\max\limits_{1\leq k\leq n}(\sigma_k^2/b_n^{2})}\bigg)^{\frac{1}{2}}\frac{\sum\limits_{k=1}^n\E[|X_k|^2(b_n\wedge |X_k|)]}{b_n^3}.
\end{equation}
\end{thm}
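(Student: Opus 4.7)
The plan is to derive the bound by combining three ingredients: (i) a Fisher--information based estimate on $\|f_{S_n}\|_\infty$ via Stam's inequality, (ii) an interpolation inequality of the form $d_{TV}(S_n,N)\le \sqrt{8\pi}\,\|f_{S_n}\|_\infty\, d_K(S_n,N)$, and (iii) the classical Kolmogorov--Lindeberg bound (\ref{lin03}) on $d_K$.

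\textbf{Step 1 (Density bound via Stam + Sobolev).} Pick $k^\ast$ with $\sigma_{k^\ast}^2=\max_k\sigma_k^2$ and write $S_n=V+U$, where $V=\sum_{k\ne k^\ast}X_k/b_n$ is independent of $U=X_{k^\ast}/b_n$. Note $\mathrm{Var}(V)=1-\max_k\sigma_k^2/b_n^2$. Since $I(X_k/b_n)=b_n^2 J(X_k)/\sigma_k^2$, Stam's inequality yields
\begin{equation*}
\frac{1}{I(V)}\ \ge\ \sum_{k\ne k^\ast}\frac{1}{I(X_k/b_n)}\ =\ \sum_{k\ne k^\ast}\frac{\sigma_k^2}{b_n^2 J(X_k)}\ \ge\ \frac{1-\max_k\sigma_k^2/b_n^2}{\max_k J(X_k)},
\end{equation*}
so $I(V)\le \max_k J(X_k)/(1-\max_k\sigma_k^2/b_n^2)$. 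A Sobolev--type estimate (obtained from $p(b)-p(a)=\int_a^b p'\,dx$ and Cauchy--Schwarz) gives $\|p\|_\infty\le \sqrt{I(p)}$ for any density $p$ with finite Fisher information, hence $\|f_V\|_\infty\le \sqrt{I(V)}$. Since $f_{S_n}=f_V\ast f_U$ and $\|f_V\ast f_U\|_\infty\le \|f_V\|_\infty\,\|f_U\|_1=\|f_V\|_\infty$, we conclude
\begin{equation*}
\|f_{S_n}\|_\infty\ \le\ \sqrt{\frac{\max_k J(X_k)}{1-\max_k\sigma_k^2/b_n^2}}.
\end{equation*}

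\textbf{Step 2 (Interpolation between $d_{TV}$ and $d_K$).} The key inequality I would establish is
\begin{equation*}
d_{TV}(S_n,N)\ \le\ \sqrt{8\pi}\,\|f_{S_n}\|_\infty\, d_K(S_n,N).
\end{equation*}
The approach is Fourier analytic: since $G(x):=F_{S_n}(x)-\Phi(x)$ satisfies $\|G\|_\infty=d_K$ and $G\to 0$ at $\pm\infty$, integration by parts gives $\phi_{S_n}(t)-e^{-t^2/2}=it\,\widehat{G}(t)$. Parseval then relates $\|f_{S_n}-\varphi\|_2^2$ to an integral involving $G$. To pass from $L^2$ to $L^1$ I plan to use the identity $\|f_{S_n}-\varphi\|_2^2\le \|f_{S_n}-\varphi\|_\infty\,\|f_{S_n}-\varphi\|_1$ together with a weighted Cauchy--Schwarz argument, exploiting the fact that $\|f_{S_n}-\varphi\|_\infty\le \|f_{S_n}\|_\infty+\|\varphi\|_\infty\le 2\|f_{S_n}\|_\infty$ whenever $\|f_{S_n}\|_\infty\ge 1/\sqrt{2\pi}$ (and handled separately otherwise); the factor $\sqrt{8\pi}=2\sqrt{2\pi}$ emerges from $1/\|\varphi\|_\infty=\sqrt{2\pi}$ and this doubling.

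\textbf{Step 3 (Finish).} Apply (\ref{lin03}) to bound $d_K(S_n,N)\le\frac{c}{b_n^3}\sum_{k=1}^n\E[|X_k|^2(b_n\wedge|X_k|)]$ and substitute into the inequality from Step 2, absorbing the absolute constant $c$ into the explicit prefactor.

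\textbf{Main obstacle.} Step 2 is where the real work lies; while Step 1 is a clean two--line application of Stam plus a Sobolev estimate, turning Parseval's identity into a precise $L^1$ estimate on $f_{S_n}-\varphi$ with the optimal constant $\sqrt{8\pi}$ will require careful handling of the tails (to ensure the weighted Cauchy--Schwarz argument converges with the right constant) and of the case $\|f_{S_n}\|_\infty<1/\sqrt{2\pi}$.
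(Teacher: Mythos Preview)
Your Step 2 is where the argument breaks, and it breaks fundamentally rather than technically. The interpolation inequality $d_{TV}(S_n,N)\le \sqrt{8\pi}\,\|f_{S_n}\|_\infty\, d_K(S_n,N)$ is false in general: take densities of the form $f_n(x)=c_n\varphi(x)(1+a\sin(nx))$ with $0<a<1$ and $c_n$ the normalizing constant. Then $\|f_n\|_\infty$ stays bounded, $d_K(f_n,\varphi)=O(1/n)\to 0$ (integrate by parts), yet $d_{TV}(f_n,\varphi)\to a/\pi>0$. Your Parseval sketch cannot rescue this: the identity $\|f_{S_n}-\varphi\|_2^2\le \|f_{S_n}-\varphi\|_\infty\|f_{S_n}-\varphi\|_1$ gives a \emph{lower} bound on $\|f_{S_n}-\varphi\|_1$, not an upper bound, so the chain of inequalities runs the wrong direction for what you need. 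Nothing in your outline uses the specific structure of $S_n$ as a sum to rule out such oscillatory behavior.

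Step 3 has an independent problem: inequality (\ref{lin03}) carries an unspecified absolute constant $c$, and you cannot ``absorb'' an unknown constant into the explicit $\sqrt{8\pi}$ that the theorem asserts. Even if Steps 1--2 were valid you would obtain a bound with $c\sqrt{8\pi}$, not $\sqrt{8\pi}$.

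The paper's route is entirely different and does not pass through $d_K$ at all. It works directly with Stein's equation $f'(x)-xf(x)=h(x)-\E h(N)$ for $\|h\|_\infty\le 1$, expands $\E[S_n f(S_n)]$ using the Stein kernels $\tau_k$ of each $X_k$ to get $\E h(S_n)-\E h(N)=b_n^{-2}\sum_k\E[f'(S_n)(\sigma_k^2-\tau_k(X_k))]$, and then---this is the key move---uses the score function $\rho_{k,n}$ of the leave-one-out sum $S_{k,n}$ to trade $f'$ for $f$ via $\E[f'(S_n)\,\cdot\,]=-\tfrac{b_n}{b_{k,n}}\E[f(S_n)\rho_{k,n}(S_{k,n})\,\cdot\,]$. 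The explicit $\sqrt{8\pi}$ comes from the Stein bounds $\|f\|_\infty\le\sqrt{2\pi}$ and $\|f'\|_\infty\le 4$, and the Fisher information enters through $\E|\rho_{k,n}(S_{k,n})|\le(\max_k J(X_k))^{1/2}$, which follows from the conditional-expectation representation of $\rho_{k,n}$. Your Step 1 bound on $\|f_{S_n}\|_\infty$ via Stam is correct but plays no role in this argument.
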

Our condition $J(X_k)<\infty$ is stronger than the condition $D(X_k)<\infty$ required in \cite{Bobkov2014a}. Indeed, we have $D(X_k)\leq(J(X_k)-1)/2,$ see e.g. Lemma E.3 in \cite{Johnson2004}. However, our bound (\ref{vhas}) allows us to upgrade the Lindeberg central limit theorem to the convergence in total variation distance as follows.
\begin{cor}\label{plu1f}Let the sequence $(X_k)_{k\geq 1}$ be as in Theorem \ref{plu}. Assume additionally that $\sup\limits_{k\geq 1}J(X_k)<\infty.$ The, the following two statements are equivalent

\noindent $(i)$ Lindeberg's condition (\ref{lin01}) is satisfied.

\noindent $(ii)$ $d_{TV}(S_n,N)\to 0$ as $n\to\infty$ and the Feller-L\'evy condition (\ref{lin02}) is satisfied.

\end{cor}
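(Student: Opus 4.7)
The plan is to handle the two implications separately, leveraging Theorem \ref{plu} for the forward direction and the classical necessity portion of the Lindeberg--Feller theorem for the converse.

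For $(i)\Rightarrow (ii)$, I would first record that Lindeberg's condition always forces the Feller--L\'evy condition: for any fixed $\varepsilon>0$ and every $1\le k\le n$,
\[
\frac{\sigma_k^2}{b_n^2}\le \varepsilon^2+\frac{1}{b_n^2}\E\bigl[X_k^2\ind_{\{|X_k|>\varepsilon b_n\}}\bigr]\le \varepsilon^2+\frac{1}{b_n^2}\sum_{j=1}^n\E\bigl[X_j^2\ind_{\{|X_j|>\varepsilon b_n\}}\bigr],
\]
so maximizing over $k$, then letting $n\to\infty$ and $\varepsilon\to 0$ gives (\ref{lin02}). Consequently, for all $n$ sufficiently large, $1-\max_k \sigma_k^2/b_n^2\ge 1/2$; together with the uniform bound $\max_k J(X_k)\le \sup_k J(X_k)<\infty$, the entire prefactor in (\ref{vhas}) is bounded by a fixed constant independent of $n$.

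The remaining step is to check that the ratio
\[
\frac{1}{b_n^3}\sum_{k=1}^n \E\bigl[|X_k|^2(b_n\wedge |X_k|)\bigr]
\]
tends to $0$. I would carry out the standard Lindeberg truncation: fix $\varepsilon>0$ and split each summand according to whether $|X_k|\le \varepsilon b_n$ or $|X_k|>\varepsilon b_n$. On the first event $b_n\wedge |X_k|\le \varepsilon b_n$, yielding a total contribution at most $\varepsilon b_n\sum_k\sigma_k^2=\varepsilon b_n^3$; on the second, $b_n\wedge |X_k|\le b_n$, and Lindeberg's condition bounds the total by $o(b_n^3)$. Dividing by $b_n^3$, then sending $n\to\infty$ followed by $\varepsilon\to 0$, drives the right-hand side of (\ref{vhas}) to $0$.

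For $(ii)\Rightarrow (i)$, convergence in total variation implies weak convergence, so $S_n\Rightarrow N$. The assumed Feller--L\'evy condition together with $S_n\Rightarrow N$ is exactly the hypothesis of the classical necessity half of the Lindeberg--Feller CLT, which directly yields (\ref{lin01}). The main (and only) technical obstacle is therefore the truncation bookkeeping for $(i)\Rightarrow (ii)$; the converse reduces at once to a textbook result, and the Fisher-information hypothesis enters solely by rendering the prefactor in (\ref{vhas}) uniformly bounded.
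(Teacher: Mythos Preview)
Your proposal is correct and follows essentially the same approach as the paper. Both directions are handled identically: for $(ii)\Rightarrow(i)$ the paper also reduces immediately to the classical necessity half of Lindeberg--Feller via the implication $d_{TV}\to 0\Rightarrow$ weak convergence; for $(i)\Rightarrow(ii)$ the paper likewise bounds the prefactor in (\ref{vhas}) using Feller--L\'evy and $\sup_k J(X_k)<\infty$, and controls the truncated-moment sum by the same $\varepsilon$-splitting (the paper uses a three-region split on $\{|X_k|>b_n\}$, $\{\varepsilon b_n<|X_k|\le b_n\}$, $\{|X_k|\le\varepsilon b_n\}$, but this collapses to exactly your two-region bound $\varepsilon + b_n^{-2}\sum_k\E[X_k^2\ind_{\{|X_k|>\varepsilon b_n\}}]$).
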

We use the condition $J(X_k)<\infty$ is to establish the bound (\ref{vhas}). However, this condition can be removed to recover the classical central limit theorem.
\begin{cor}\label{p30lu1f}Let $(Y_k)_{k\geq 1}$ be a sequence of independent real-valued random variables $\E Y_k=0$ and finite variances $\alpha_k^2=\E Y_k^2\in (0,\infty).$ Assume that this sequence satisfies Lindeberg's condition:
\begin{equation}\label{lin01p}
\lim\limits_{n\to\infty}\frac{1}{a_n^2}\sum\limits_{k=1}^n\E[Y_k^2\ind_{\{|Y_k|>\varepsilon a_n\}}]=0
\end{equation}
for all $\varepsilon>0,$  where $a_n:=\sqrt{\alpha_1^2+\cdots+\alpha_n^2}.$ Then, as $n\to\infty,$ we have
$\frac{Y_1+\cdots+Y_n}{a_n}\to N$ in distribution.
\end{cor}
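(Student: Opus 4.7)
The plan is to reduce Corollary~\ref{p30lu1f} to Corollary~\ref{plu1f} by regularising each summand with an independent Gaussian perturbation, which immediately produces a sequence with uniformly bounded standardised Fisher information. Fix $\varepsilon>0$, let $(Z_k)_{k\geq 1}$ be i.i.d.\ standard normal variables independent of $(Y_k)_{k\geq 1}$, and set
$$Y_k^{(\varepsilon)}:=Y_k+\varepsilon\alpha_k Z_k.$$
The sequence $(Y_k^{(\varepsilon)})$ consists of independent, centred variables with variance $(1+\varepsilon^2)\alpha_k^2$; the corresponding normaliser is $a_n^{(\varepsilon)}:=\sqrt{1+\varepsilon^2}\,a_n$, and each $Y_k^{(\varepsilon)}$ has a $C^{\infty}$ density arising from the Gaussian convolution.

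To apply Corollary~\ref{plu1f} to $(Y_k^{(\varepsilon)})$, I would verify two points. First, the Blachman--Stam inequality $I(X+Y)^{-1}\geq I(X)^{-1}+I(Y)^{-1}$, applied with $X=Y_k$ and $Y=\varepsilon\alpha_k Z_k$, yields $I(Y_k^{(\varepsilon)})\leq 1/(\varepsilon^2\alpha_k^2)$, and therefore
$$J(Y_k^{(\varepsilon)})=\mathrm{Var}(Y_k^{(\varepsilon)})\,I(Y_k^{(\varepsilon)})\leq \frac{1+\varepsilon^2}{\varepsilon^2}\qquad\text{uniformly in }k.$$
Second, Lindeberg's condition for $(Y_k^{(\varepsilon)})$ is inherited from that of $(Y_k)$: using $(Y_k^{(\varepsilon)})^2\,\ind_{|Y_k^{(\varepsilon)}|>c}\leq 2(Y_k^2+\varepsilon^2\alpha_k^2 Z_k^2)(\ind_{|Y_k|>c/2}+\ind_{\varepsilon\alpha_k|Z_k|>c/2})$ with $c=\delta a_n^{(\varepsilon)}$, the Lindeberg sum for $(Y_k^{(\varepsilon)})$ splits into four pieces. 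One is Lindeberg for $(Y_k)$ at threshold $\delta\sqrt{1+\varepsilon^2}/2$; two reduce, by independence, to Gaussian tails $P(|Z|>\delta a_n^{(\varepsilon)}/(2\varepsilon\alpha_k))$ whose arguments blow up uniformly in $k$ thanks to the Feller--L\'evy consequence $\max_k(\alpha_k^2/a_n^2)\to 0$ of Lindeberg for $(Y_k)$; the remaining piece is handled by Chebyshev together with $\max_k(\alpha_k^2/a_n^2)\to 0$.

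Corollary~\ref{plu1f} then yields $T_n^{(\varepsilon)}:=(a_n^{(\varepsilon)})^{-1}\sum_{k=1}^n Y_k^{(\varepsilon)}\to N$ in total variation, in particular in distribution. Writing
$$T_n^{(\varepsilon)}=\frac{\tilde S_n+\varepsilon W_n}{\sqrt{1+\varepsilon^2}},\qquad \tilde S_n:=\frac{Y_1+\cdots+Y_n}{a_n},\qquad W_n:=\frac{1}{a_n}\sum_{k=1}^n \alpha_k Z_k,$$
and observing that $W_n\sim N(0,1)$ (its variance is $\sum_k \alpha_k^2/a_n^2=1$) and is independent of $\tilde S_n$, the characteristic function identity
$$\varphi_{T_n^{(\varepsilon)}}(t)=\varphi_{\tilde S_n}\!\left(\frac{t}{\sqrt{1+\varepsilon^2}}\right)e^{-\varepsilon^2 t^2/(2(1+\varepsilon^2))}\longrightarrow e^{-t^2/2}$$
implies $\varphi_{\tilde S_n}(s)\to e^{-s^2/2}$ for every $s\in\mathbb{R}$ after substituting $s=t/\sqrt{1+\varepsilon^2}$. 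L\'evy's continuity theorem then concludes that $\tilde S_n\to N$ in distribution, which is exactly the assertion of Corollary~\ref{p30lu1f}.

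The main technical obstacle lies in verifying Lindeberg's condition for the perturbed sequence, since the Gaussian smoothing creates tails that must be summed over all indices. The key point is that Feller--L\'evy forces every threshold $\delta a_n^{(\varepsilon)}/(2\varepsilon\alpha_k)$ to tend to infinity uniformly in $k$, so the Gaussian-tail contributions decay super-polynomially and remain negligible after summation; the other terms are routinely controlled by Lindeberg for $(Y_k)$, Chebyshev, and the variance identity $\sum_k\alpha_k^2=a_n^2$.
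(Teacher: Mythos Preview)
Your proposal is correct and follows essentially the same route as the paper: perturb each $Y_k$ by an independent centred Gaussian of variance proportional to $\alpha_k^2$, check that the perturbed sequence inherits Lindeberg's condition and has uniformly bounded $J$, apply Corollary~\ref{plu1f}, and then undo the perturbation at the level of characteristic functions via L\'evy's continuity theorem. The paper simply takes your parameter $\varepsilon=1$ (so the Gaussian has variance exactly $\alpha_k^2$), bounds $J(X_k)\le 2$ using the conditional-expectation representation of the score rather than Blachman--Stam, and controls the cross terms in the Lindeberg verification with Chebyshev's association inequality instead of your Chebyshev-plus-Feller--L\'evy argument; these are cosmetic differences, and in particular the extra parameter $\varepsilon$ is never sent to $0$, so a single fixed value already suffices.
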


\section{Proofs}
We first prepare some technical results. For a given real valued measurable function $h$ with $\E h(Z)<\infty.$ The ordinary differential equation
      \begin{equation} \label{stein}
      f'(x) - xf(x) = h(x) - \mathbb{E}h(Z)
      \end{equation}
is called Stein's equation associated with $h$. The unique bounded solution of (\ref{stein}) is given by
\begin{equation} \label{steintv}
f(x):= e^{x^2/2}\int_{-\infty}^x (h(y) - \mathbb{E}h(Z))e^{-y^2/2} dy,\,\,x\in \mathbb{R}.
\end{equation}
We recall from Lemma 2.4 in \cite{Louischen2011} the following estimates for $f.$
\begin{lem}\label{s7ervc}  Let $f$ be the solution (\ref{steintv}) to Stein's equation. If $h$ is bounded $1,$ then
\begin{equation*}
\text{$\|f\|_\infty\leq \sqrt{2\pi}$ and $\|f'\|_\infty\leq 4$}.
\end{equation*}
\end{lem}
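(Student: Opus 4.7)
The plan is to prove both inequalities by direct estimation of the explicit integral representation (\ref{steintv}) of $f$, using two standard Mills' ratio inequalities for the Gaussian tail. Setting $\tilde h(x) := h(x) - \mathbb{E}h(Z)$, the hypothesis $\|h\|_\infty \le 1$ gives $\|\tilde h\|_\infty \le 2$. Because $\mathbb{E}\tilde h(Z) = 0$, the solution $f$ admits the two equivalent representations
\begin{equation*}
f(x) \;=\; e^{x^2/2}\!\int_{-\infty}^{x} \tilde h(y)\, e^{-y^2/2}\,dy \;=\; -\,e^{x^2/2}\!\int_{x}^{\infty} \tilde h(y)\, e^{-y^2/2}\,dy.
\end{equation*}
For each $x$ I would use whichever half-line integral avoids the origin; by the symmetry $x \mapsto -x$ and $\tilde h \mapsto -\tilde h(-\cdot)$, it is enough to handle $x \ge 0$ via the second representation.

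For $\|f\|_\infty \le \sqrt{2\pi}$, the crude estimate $|\tilde h| \le 2$ reduces matters to bounding $e^{x^2/2}\int_x^\infty e^{-y^2/2}\,dy$ for $x \ge 0$. The substitution $y = x+u$ rewrites this as $\int_0^\infty e^{-xu - u^2/2}\,du$, which is at most $\int_0^\infty e^{-u^2/2}\,du = \sqrt{\pi/2}$. Multiplying by $2$ delivers the bound.

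For $\|f'\|_\infty \le 4$, I would invoke Stein's equation (\ref{stein}) itself in the form $f'(x) = x f(x) + \tilde h(x)$, so that $|f'(x)| \le |x f(x)| + 2$. The key is the sharper Mills' ratio inequality $x\, e^{x^2/2}\!\int_x^\infty e^{-y^2/2}\,dy \le 1$ for $x \ge 0$. After the same substitution this is the statement $\int_0^\infty x\, e^{-xu-u^2/2}\,du \le 1$, which follows from the exact identity $\int_0^\infty (x+u)\, e^{-xu-u^2/2}\,du = 1$ obtained by integrating $\frac{d}{du}\bigl[-e^{-xu-u^2/2}\bigr]$ over $[0,\infty)$, and then dropping the nonnegative $u$-term. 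Together with $|\tilde h| \le 2$ this yields $|xf(x)| \le 2$ and hence $|f'(x)| \le 4$.

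The main and really only obstacle is the pair of Mills' ratio estimates; everything else is bookkeeping involving the factor $2$ in $\|\tilde h\|_\infty$, which accounts for the constants $\sqrt{2\pi}$ and $4$ (rather than $\sqrt{\pi/2}$ and $2$ as would arise for a centered test function). The stated constants are not tight — for indicator test functions one recovers the sharper Stein constants used in Kolmogorov-distance bounds — but the argument above gives them in a single page.
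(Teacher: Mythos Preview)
Your argument is correct. The two Mills' ratio estimates you use are exactly the right tools, and the bookkeeping with the factor $2$ from $\|\tilde h\|_\infty\le 2$ is handled cleanly; the symmetry reduction to $x\ge 0$ is also fine since for $x<0$ one gets the same Gaussian tail integral $e^{x^2/2}\int_{|x|}^\infty e^{-y^2/2}\,dy$ directly from the first representation.

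As for comparison with the paper: the paper does not actually prove this lemma. It simply quotes the result as Lemma~2.4 of Chen--Goldstein--Shao \cite{Louischen2011} and moves on. Your write-up is precisely the classical derivation that underlies that reference (in the form $\|f\|_\infty\le\sqrt{\pi/2}\,\|\tilde h\|_\infty$ and $\|f'\|_\infty\le 2\|\tilde h\|_\infty$), so you are supplying the details the paper chose to outsource. Nothing is missing, and there is no genuine difference in approach beyond the fact that you give an argument while the paper gives a citation.
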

\begin{lem}\label{servc} Let the sequence $(X_k)_{k\geq 1}$ be as in Theorem \ref{plu}. For each $1\leq k\leq n,$ we put
$$S_{k,n}=\frac{X_1+\cdots+X_{k-1}+X_{k+1}+\cdots+X_n}{b_{k,n}},$$
where $b_{k,n}=\sqrt{b_n^2-\sigma_k^2}.$ Then the law of $S_{k,n}$ has an absolutely continuous density. Moreover, the score function $\rho_{k,n}$ of $S_{k,n}$ is given by
\begin{equation}\label{8dls}
\rho_{k,n}(x)=\E\left[\frac{\sum\limits_{1\leq i\leq n,i\neq k}\sigma_i^2\rho_i(X_i)}{b_{k,n}}\bigg| S_{k,n}=x\right],
\end{equation}
where $\rho_i$ denotes the score function of $X_i.$
\end{lem}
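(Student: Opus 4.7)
The plan is to prove the lemma in three steps: first establish that $S_{k,n}$ has an absolutely continuous density; then derive a projection identity expressing the score of a convolution as a conditional expectation of the score of any single summand; and finally form a convex combination and rescale.

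For the first step, write $T_{k,n}:=\sum_{i\neq k}X_i$ so that $S_{k,n}=T_{k,n}/b_{k,n}$. The density of $T_{k,n}$ is the convolution $\ast_{i\neq k}\,p_i$. Since each $p_i$ is absolutely continuous with $p_i'\in L^1(\mathbb{R})$ (the latter following from $J(X_i)<\infty$ via $\int|p_i'|\,dx=\int|\rho_i|\,p_i\,dx\leq(\E[\rho_i^2])^{1/2}<\infty$ by Cauchy--Schwarz), one may differentiate the convolution in any one chosen factor. This exhibits $p_{T_{k,n}}$ as absolutely continuous, and the scaling identity $p_{S_{k,n}}(x)=b_{k,n}\,p_{T_{k,n}}(b_{k,n}x)$ transfers the property to $S_{k,n}$.

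For the score identity, the central tool is the projection lemma: if $U,V$ are independent with $V$ having an absolutely continuous density and a score $\rho_V$, then $W=U+V$ has score
$$\rho_W(w)=\E[\rho_V(V)\mid W=w].$$
This is proved by writing $p_W(w)=\int p_U(u)\,p_V(w-u)\,du$, differentiating under the integral (legitimate since $p_V'\in L^1$), and then recognising the conditional density of $V$ given $W=w$. Applying this to the decomposition $T_{k,n}=X_i+\sum_{1\leq j\leq n,\,j\neq k,\,j\neq i}X_j$ for each fixed $i\neq k$ gives $\rho_{T_{k,n}}(t)=\E[\rho_i(X_i)\mid T_{k,n}=t]$ for every $i\neq k$.

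Taking the convex combination with weights $\sigma_i^2/b_{k,n}^2$, which sum to one since $b_{k,n}^2=\sum_{i\neq k}\sigma_i^2$, yields
$$\rho_{T_{k,n}}(t)=\frac{1}{b_{k,n}^2}\,\E\bigg[\sum_{1\leq i\leq n,\,i\neq k}\sigma_i^2\,\rho_i(X_i)\,\bigg|\,T_{k,n}=t\bigg].$$
Finally, the scaling rule $\rho_{S_{k,n}}(x)=b_{k,n}\,\rho_{T_{k,n}}(b_{k,n}x)$, together with the event identity $\{T_{k,n}=b_{k,n}x\}=\{S_{k,n}=x\}$, produces the claimed formula \eqref{8dls}. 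The only technical point requiring genuine care is the differentiation under the integral sign in the projection lemma; the hypothesis $J(X_i)<\infty$ provides the integrable dominating functions needed, so no real obstacle arises once the convolution is set up carefully.
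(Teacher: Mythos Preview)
Your argument is correct and complete; the route, however, differs from the paper's.

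The paper does not construct the density of $S_{k,n}$ or invoke the convolution projection identity. Instead it first appeals to a decay result of Bobkov to ensure boundary terms vanish, then uses the single-variable integration-by-parts relation $\E[f(X_i)\rho_i(X_i)]=-\E[f'(X_i)]$ together with independence to obtain, directly at the level of $S_{k,n}$, the weak identity
\[
\E\!\left[f(S_{k,n})\,\frac{\sum_{i\neq k}\sigma_i^2\rho_i(X_i)}{b_{k,n}}\right]=-\E[f'(S_{k,n})]
\]
for all bounded $C^1$ functions with bounded derivative. It then cites an earlier proposition (from \cite{Dung2024}) asserting that any random variable satisfying such an identity for a square-integrable candidate automatically has an absolutely continuous density whose score equals that candidate. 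Thus the paper argues ``integration-by-parts characterisation $\Rightarrow$ existence of score'', whereas you argue ``convolution differentiation $\Rightarrow$ score exists, and the projection lemma identifies it summand by summand''.

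What each buys: the paper's approach is shorter on the page because the hard step (passing from the weak identity to absolute continuity of the density) is outsourced to a cited result, and the identity it establishes is exactly the form used later in the proof of Theorem~\ref{plu}. Your approach is more self-contained---it needs only $p_i'\in L^1$ and dominated convergence---and it makes transparent why the particular convex weights $\sigma_i^2/b_{k,n}^2$ are natural (any weights summing to one would give a valid representation; these are the ones that minimise the $L^2$ norm of the inner random variable, which is the relevant quantity downstream). Both are standard in the information-theoretic CLT literature; your projection lemma is essentially the Blachman--Stam device.
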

\begin{proof} For each $k\geq 1,$ it follows from Theorem VIII.1 in \cite{Bobkov2019} that the density $p_k$ of the random variable $X_k$ satisfies
$$\lim\limits_{x\to \pm\infty}(1+|x|)p_k(x)=0.$$
Hence, for any bounded and differentiable function $f:\mathbb{R}\to \mathbb{R}$ with bounded derivative, we can use Lemma 1.20 in \cite{Johnson2004} to get
$$\E[f(X_k)\rho_k(X_k)]=-\E[f'(X_k)].$$
Consequently, by the independence of $X_k's,$ we obtain
\begin{align}
\E\left[f(S_{k,n})\frac{\sum\limits_{1\leq i\leq n,i\neq k}\sigma_i^2\rho_i(X_i)}{b_{k,n}}\right]&=\frac{\sum\limits_{1\leq i\leq n,i\neq k}\sigma_i^2\E\left[f(S_{k,n})\rho_i(X_i)\right]}{b_{k,n}}\notag\\
&=-\E[f'(S_{k,n})].\label{ydlpo}
\end{align}
Then, by using the same arguments as in the proof of Proposition 3.2 in our recent paper \cite{Dung2024}, we conclude that the law of $S_{k,n}$ has an absolutely continuous density and its score function is given by (\ref{8dls}). This finishes the proof of the lemma.
\end{proof}
\begin{defn}\label{klfa} Given a random variable $Y\in L^1(\Omega)$ with a density $p_Y$ supported on some connected interval of $\mathbb{R},$  we define the Stein kernel $\tau_Y$ associated to  $Y$ as follows
$$\tau_Y(x)=\frac{\int_{x}^\infty (y-\E[Y])p_Y(y)dy}{p_Y(x)}$$
for $x$ in the support of $p_Y.$ 
\end{defn} 
In the proofs, we use the following fundamental properties of Stein kernels, see e.g. \cite{Ley2017} and references therein: $\tau_Y(Y)\geq 0,$ $\E[\tau_Y(Y)]={\rm Var}(Y)$ and
\begin{equation}\label{k2hd}
\E[f(Y)(Y-\E Y)]=\E[f'(Y)\tau_Y(Y)]
\end{equation}
for any differentiable function $f:\mathbb{R}\to \mathbb{R}$ with bounded derivative.

We now are ready to prove the main results of the present paper.

\noindent{\bf Proof of Theorem \ref{plu}.} We will carry out the proof in two steps.

\noindent{\it Step 1.} In this step, we additionally assume that
\begin{equation}\label{gye}
\text{\it For every $k\geq 1,$ the density $p_k$ of $X_k$ is supported on some connected interval of $\mathbb{R}.$}
\end{equation}
According to Definition \ref{klfa}, this assumption allows us to define the Stein kernels
$$\tau_k(X_k)=\frac{\int_{X_k}^\infty yp_k(y)dy}{p_k(x)},\,\,k\geq 1.$$
Let $h$ be a measurable function bounded by $1.$ From the Stein equation (\ref{stein}) we have
\begin{align*}
\E h(S_n)-\E h(N)&=\E f'(S_n) - \E[S_nf(S_n)] \\
&=\E f'(S_n) - \frac{1}{b_n}\sum\limits_{k=1}^n\E[X_kf(S_n)].
\end{align*}
Hence, by the property (\ref{k2hd}) of the Stein kernels, we deduce
\begin{align}
\E h(S_n)-\E h(N)&=\E f'(S_n) - \frac{1}{b_n^2}\sum\limits_{k=1}^n\E[\tau_k(X_k)f'(S_n)]\notag\\
&=\frac{1}{b_n^2}\E\left[f'(S_n)(b_n^2-\tau_1(X_1)-\cdots-\tau_n(X_n))\right]\notag\\
&=\frac{1}{b_n^2}\sum\limits_{k=1}^n\E\left[f'(S_n)(\sigma_k^2-\tau_k(X_k))\right].\label{servc8}
\end{align}
Fixed $1\leq k\leq n.$ Let $S_{k,n}$ be as in Lemma \ref{servc}. By the independence of $X_k$ and $S_{k,n},$ we can use the relation (\ref{ydlpo}) to get
\begin{align*}
\E\left[f'(S_n)(\sigma_k^2-\tau_k(X_k))\right]&=\E\left[f'\bigg(\frac{b_{k,n}S_{k,n}+X_k}{b_n}\bigg)(\sigma_k^2-\tau_k(X_k))\right]\\
&=-\frac{b_n}{b_{k,n}}\E\left[f\bigg(\frac{b_{k,n}S_{k,n}+X_k}{b_n}\bigg)(\sigma_k^2-\tau_k(X_k))\rho_{k,n}(S_{k,n})\right].
\end{align*}
On the other hand, since $\E[\tau_k(X_k)]=\sigma_k^2,$ we have
$$\E\left[f\bigg(\frac{b_{k,n}S_{k,n}}{b_n}\bigg)(\sigma_k^2-\tau_k(X_k))\rho_{k,n}(S_{k,n})\right]
=\E\left[f\bigg(\frac{b_{k,n}S_{k,n}}{b_n}\bigg)\rho_{k,n}(S_{k,n})\right]\E[\sigma_k^2-\tau_k(X_k)]=0.$$
So we obtain
\begin{align}
&\E\left[f'(S_n)(\sigma_k^2-\tau_k(X_k))\right]\notag\\
&=-\frac{b_n}{b_{k,n}}\E\left[\bigg(f\bigg(\frac{b_{k,n}S_{k,n}+X_k}{b_n}\bigg)-f\bigg(\frac{b_{k,n}S_{k,n}}{b_n}\bigg)\bigg)
(\sigma_k^2-\tau_k(X_k))\rho_{k,n}(S_{k,n})\right].\label{ttdk01}
\end{align}
Furthermore, we have
\begin{align}
&\bigg|f\bigg(\frac{b_{k,n}S_{k,n}+X_k}{b_n}\bigg)-f\bigg(\frac{b_{k,n}S_{k,n}}{b_n}\bigg)\bigg|\notag\\
&=\bigg|f\bigg(\frac{b_{k,n}S_{k,n}+X_k}{b_n}\bigg)-f\bigg(\frac{b_{k,n}S_{k,n}}{b_n}\bigg)\bigg|\ind_{\{|X_k|>b_n\}}
+\bigg|f\bigg(\frac{b_{k,n}S_{k,n}+X_k}{b_n}\bigg)-f\bigg(\frac{b_{k,n}S_{k,n}}{b_n}\bigg)\bigg|\ind_{\{|X_k|\leq b_n\}}.\notag
\end{align}
Then, by using the estimates $\|f\|_\infty\leq \sqrt{2\pi}$ and $\|f'\|_\infty\leq 4$ from Lemma \ref{s7ervc}, we deduce
\begin{align}
\bigg|f\bigg(\frac{b_{k,n}S_{k,n}+X_k}{b_n}\bigg)-f\bigg(\frac{b_{k,n}S_{k,n}}{b_n}\bigg)\bigg|
&\leq \sqrt{8\pi}\ind_{\{|X_k|>b_n\}}+\frac{4|X_k|}{b_n}\ind_{\{|X_k|\leq b_n\}}\notag\\
&\leq \frac{\sqrt{8\pi}}{b_n}(b_n\wedge |X_k|).\label{ttdk02}
\end{align}
Inserting (\ref{ttdk02}) into (\ref{ttdk01}) yields
\begin{align}
|\E&\left[f'(S_n)(\sigma_k^2-\tau_k(X_k))\right]|\leq
\frac{\sqrt{8\pi}}{b_{k,n}}\E\left[(b_n\wedge |X_k|)|(\sigma_k^2-\tau_k(X_k))\rho_{k,n}(S_{k,n})|\right]\notag\\
&\leq
\frac{\sqrt{8\pi}}{b_{k,n}}\E\left[(b_n\wedge |X_k|)|\sigma_k^2-\tau_k(X_k)|\right]\E|\rho_{k,n}(S_{k,n})|\notag\\
&\leq \frac{\sqrt{8\pi}}{b_{k,n}}\E|\rho_{k,n}(S_{k,n})|\E\left[(b_n\wedge |X_k|)\tau_k(X_k)\right]+\frac{\sqrt{8\pi}}{b_{k,n}}\E|\rho_{k,n}(S_{k,n})|\E\left[(b_n\wedge |X_k|)\sigma_k^2\right].\label{uerjs}
\end{align}
Once again, we use the property (\ref{k2hd}) of the Stein kernels to get the following estimate for the first addend in the right hand side of (\ref{uerjs})
\begin{align*}
\E[(b_n\wedge |X_k|)\tau_k(X_k)]&=\E\left[X_k\int_0^{X_k}(b_n\wedge |y|)dy\right]\\
&\leq\E\left[|X_k|\int_0^{|X_k|}(b_n\wedge y)dy\right]\\
&\leq \E[|X_k|^2(b_n\wedge |X_k|)].
\end{align*}
For the second addend in the right hand side of (\ref{uerjs}), by Chebyshev's association inequality, we have
$$\E[(b_n\wedge |X_k|)\sigma_k^2]=\E[(b_n\wedge |X_k|)]\E[|X_k|^2]\leq  \E[|X_k|^2(b_n\wedge |X_k|)].$$
So it holds that
$$|\E\left[f'(S_n)(\sigma_k^2-\tau_k(X_k))\right]|\leq\frac{2\sqrt{8\pi}}{b_{k,n}}\E|\rho_{k,n}(S_{k,n})|\E[|X_k|^2(b_n\wedge |X_k|)],\,\,1\leq k\leq n.$$
From the above estimates and (\ref{servc8}), we conclude that
\begin{align*}
|\E h(S_n)-\E h(N)|\leq \frac{2\sqrt{8\pi}}{b_n^2}\sum\limits_{k=1}^n\frac{\E|\rho_{k,n}(S_{k,n})|\E[|X_k|^2(b_n\wedge |X_k|)]}{b_{k,n}}
\end{align*}
for any measurable function $h$ bounded by $1.$

\noindent{\it Step 2.} In this step, we remove the assumption (\ref{gye}) and conclude the proof. Let $(N_k)_{k\geq 1}$ be a sequence of independent standard normal random variables, and independence of $(X_k)_{k\geq 1}.$ Fixed $\delta>0,$ we put $\tilde{X}_k=X_k+\delta N_k$  and
$$\tilde{S}_n=\frac{\tilde{X}_1+\cdots+\tilde{X}_n}{\tilde{b}_n},\,\,\,
\tilde{S}_{k,n}=\frac{\tilde{X}_1+\cdots+\tilde{X}_{k-1}+\tilde{X}_{k+1}+\cdots+\tilde{X}_n}{\tilde{b}_{k,n}},$$
where $\tilde{b}_n=\sqrt{\sigma_1^2+\sigma_2^2+\cdots+\sigma_n^2+n\delta^2}$ and $\tilde{b}_{k,n}=\sqrt{\tilde{b}_n^2-\sigma_k^2-\delta^2}.$ Since the sequence $(\tilde{X}_k)_{k\geq 1}$ fulfils the assumption (\ref{gye}), this allows to use the result proved in {\it Step 1} and we  obtain
\begin{align}
|\E h(\tilde{S}_n)-\E h(N)|\leq \frac{2\sqrt{8\pi}}{\tilde{b}_n^2}\sum\limits_{k=1}^n\frac{\E|\tilde{\rho}_{k,n}(\tilde{S}_{k,n})|\E[|\tilde{X}_k|^2(\tilde{b}_n\wedge |\tilde{X}_k|)]}{\tilde{b}_{k,n}}\label{k2hdoo}
\end{align}
for any measurable function $h$ bounded by $1,$ where $\tilde{\rho}_{k,n}$ denotes the score function of $\tilde{S}_{k,n}.$

Obviously, we have $\lim\limits_{\delta\to 0}\tilde{b}_n^2=b_n^2,$ $\lim\limits_{\delta\to 0}\tilde{b}_{k,n}=b_{k,n}$ and $\lim\limits_{\delta\to 0}\E[|\tilde{X}_k|^2(\tilde{b}_n\wedge |\tilde{X}_k|)]=\E[|X_k|^2(b_n\wedge |X_k|)].$  We also have $\lim\limits_{\delta\to 0}\E|\tilde{\rho}_{k,n}(\tilde{S}_{k,n})|)=\E|\rho_{k,n}(S_{k,n})|$ by Theorem 1.4 in \cite{Bobkov2025}. As a consequence, by letting $\delta\to 0,$ we obtain from (\ref{k2hdoo}) that
\begin{align*}
|\E h(S_n)-\E h(N)|\leq \frac{2\sqrt{8\pi}}{b_n^2}\sum\limits_{k=1}^n\frac{\E|\rho_{k,n}(S_{k,n})|\E[|X_k|^2(b_n\wedge |X_k|)]}{b_{k,n}}
\end{align*}
for any continuous function $h$ bounded by $1.$ In addition, from the representation formula (\ref{8dls}), we have
\begin{align*}
\E|\rho_{k,n}(S_{k,n})|&\leq \sqrt{\E|\rho_{k,n}(S_{k,n})|^2}\leq \frac{1}{b_{k,n}}\bigg(\E\bigg|\sum\limits_{1\leq i\leq n,i\neq k}\sigma_i^2\rho_i(X_i)\bigg|^2\bigg)^{\frac{1}{2}}\\
&=\frac{1}{b_{k,n}}\bigg(\sum\limits_{1\leq i\leq n,i\neq k}\sigma_i^4\E|\rho_i(X_i)|^2\bigg)^{\frac{1}{2}}=\frac{1}{b_{k,n}}\bigg(\sum\limits_{1\leq i\leq n,i\neq k}\sigma_i^2J(X_i)\bigg)^{\frac{1}{2}}\\
&\leq \big(\max\limits_{1\leq k\leq n}J(X_k)\big)^{\frac{1}{2}}.
\end{align*}
We therefore obtain, for any continuous function $h$ bounded by $1,$
\begin{align}
|\E h(S_n)-\E h(N)|&\leq \frac{2\big(8\pi\max\limits_{1\leq k\leq n}J(X_k)\big)^{\frac{1}{2}}}{b_n^2}\sum\limits_{k=1}^n\frac{\E[|X_k|^2(b_n\wedge |X_k|)]}{b_{k,n}}\notag\\
&= \frac{2\big(8\pi\max\limits_{1\leq k\leq n}J(X_k)\big)^{\frac{1}{2}}}{b_n^3}\sum\limits_{k=1}^n\frac{\E[|X_k|^2(b_n\wedge |X_k|)]}{\big(1-\sigma_k^2/b_n\big)^{\frac{1}{2}}}\notag\\
&\leq 2\bigg(\frac{8\pi\max\limits_{1\leq k\leq n}J(X_k)}{1-\max\limits_{1\leq k\leq n}(\sigma_k^2/b_n^{2})}\bigg)^{\frac{1}{2}}\frac{\sum\limits_{k=1}^n\E[|X_k|^2(b_n\wedge |X_k|)]}{b_n^3}.\label{vhass}
\end{align}
By an application of Lusin's theorem,
$$d_{TV}(S_n,N)= \frac{1}{2}\sup_{h\in \mathcal{C},\|h\|_\infty\le 1}|E h(S_n)- E h(N)|,$$
where $\mathcal{C}$ denotes the space of continuous functions.  So the bound (\ref{vhas}) follows directly from (\ref{vhass}). The proof of Theorem \ref{plu} is complete. \hfill$\square$



\noindent{\bf Proof of Corollary \ref{plu1f}.} The implication $(ii)\Rightarrow (i)$ is obvious because of the fact that the convergence in total variation implies the convergence in distribution. To verify the implication $(i)\Rightarrow (ii),$ we observe that, for any $\varepsilon\in (0,1),$ we have
\begin{align*}
&\frac{\sum\limits_{k=1}^n\E[|X_k|^2(b_n\wedge |X_k|)]}{b_n^3}=\frac{\sum\limits_{k=1}^n\E[|X_k|^2\ind_{\{|X_k|>b_n\}}]}{b_n^2}+\frac{\sum\limits_{k=1}^n\E[|X_k|^3\ind_{\{|X_k|\leq b_n\}}]}{b_n^3}\\
&=\frac{\sum\limits_{k=1}^n\E[|X_k|^2\ind_{\{|X_k|>b_n\}}]}{b_n^2}+\frac{\sum\limits_{k=1}^n\E[|X_k|^3\ind_{\{\varepsilon b_n<|X_k|\leq b_n\}}]}{b_n^3}+\frac{\sum\limits_{k=1}^n\E[|X_k|^3\ind_{\{|X_k|\leq \varepsilon b_n\}}]}{b_n^3}\\
&\leq \frac{\sum\limits_{k=1}^n\E[|X_k|^2\ind_{\{|X_k|>b_n\}}]}{b_n^2}+\frac{\sum\limits_{k=1}^n\E[|X_k|^2\ind_{\{\varepsilon b_n<|X_k|\leq b_n\}}]}{b_n^2}+\frac{\varepsilon\sum\limits_{k=1}^n\E[|X_k|^3\ind_{\{|X_k|\leq \varepsilon b_n\}}]}{b_n^2}\\
&\leq \frac{\sum\limits_{k=1}^n\E[|X_k|^2\ind_{\{|X_k|>\varepsilon b_n\}}]}{b_n^2}+\varepsilon.
\end{align*}
On the other hand, it is well known that  Lindeberg's condition implies the Feller-L\'evy condition (\ref{lin02}). Hence, recalling the bound (\ref{vhas}), we deduce
$$\lim\limits_{n\to\infty}d_{TV}(S_n,N)\leq \big(8\pi\sup\limits_{k\geq 1}J(X_k)\big)^{\frac{1}{2}}\varepsilon.
$$
Letting $\varepsilon\to 0$ gives us the desired conclusion.\hfill$\square$

\noindent{\bf Proof of Corollary \ref{p30lu1f}.}
Let $(N_k)_{k\geq 1}$ be a sequence of independent normal random variables, and independence of $(Y_k)_{k\geq 1}.$ For each $k\geq 1,$ $\E N_k=0$ and $\E N_k^2=\E Y_k^2.$ We consider the sequence $X_k=Y_k+N_k,k\geq 1$ and define $S_n$ as in (\ref{dkrt}). For every $\varepsilon>0,$ we have
\begin{align*}
\frac{1}{b_n^2}\sum\limits_{k=1}^n\E[|X_k|^2\ind_{\{|X_k|>\varepsilon b_n\}}]&=\frac{1}{b_n^2}\sum\limits_{k=1}^n\E[|Y_k+N_k|^2\ind_{\{|Y_k+N_k|>\varepsilon b_n\}}]\\
&\leq \frac{2}{b_n^2}\sum\limits_{k=1}^n\E[(|Y_k|^2+|N_k|^2)\ind_{\{|Y_k|+|N_k|>\varepsilon b_n\}}]\\
&\leq \frac{2}{b_n^2}\sum\limits_{k=1}^n\E[(|Y_k|^2+|N_k|^2)(\ind_{\{|Y_k|>\varepsilon b_n/2\}}+\ind_{\{|N_k|>\varepsilon b_n/2\}})].
\end{align*}
By Chebyshev's association inequality we deduce
\begin{align*}
\frac{1}{b_n^2}\sum\limits_{k=1}^n\E[|X_k|^2\ind_{\{|X_k|>\varepsilon b_n\}}]&\leq \frac{4}{b_n^2}\sum\limits_{k=1}^n\E[|Y_k|^2\ind_{\{|Y_k|>\varepsilon b_n/2\}}]+\frac{4}{b_n^2}\sum\limits_{k=1}^n\E[|N_k|^2\ind_{\{|N_k|>\varepsilon b_n/2\}}].
\end{align*}
It is easy to check that $\frac{4}{b_n^2}\sum\limits_{k=1}^n\E[|N_k|^2\ind_{\{|N_k|>\varepsilon b_n/2\}}]\to 0$ as $n\to\infty.$ Hence, by the condition (\ref{lin01p}), we conclude that the sequence $(X_k)_{k\geq 1}$ also satisfies Lindeberg's condition.

On the other hand, by Lemma 1.20 in \cite{Johnson2004}, the score function of $X_k$ is given by
$$\rho_k(X_k)=\E\left[\frac{-N_k}{\alpha_k^2}\bigg|X_k\right],\,\,k\geq 1.$$
This implies that $\sup\limits_{k\geq 1}J(X_k)<\infty.$ Indeed, we have
$$J(X_k)={\rm Var}(X_k)\E[\rho_k^2(X_k)]=2\alpha_k^2\E\left(\E\left[\frac{-N_k}{\alpha_k^2}\bigg|X_k\right]\right)^2\leq 2,\,\,k\geq 1.$$
Thanks to Corollary \ref{plu1f}, we obtain $d_{TV}(S_n,N)\to 0$ as $n\to\infty.$ For every $t\in \mathbb{R},$ in view of L\'evy's continuity theorem, we have
\begin{align*}
\E\left[\exp\left(it\frac{Y_1+\cdots+Y_n}{a_n}\right)\right]&=e^{t^2/2}\E\left[\exp\left(it\frac{Y_1+N_1+\cdots+Y_n+N_n}{a_n}\right)\right]\\
&=e^{t^2/2}\E\left[\exp\left(it\sqrt{2}S_n\right)\right]\to e^{-t^2/2},\,\,n\to\infty
\end{align*}
and hence, $\frac{Y_1+\cdots+Y_n}{a_n}\to N$ in distribution.

The proof is complete.\hfill$\square$




\end{document}